\def\ps@pprintTitle{%
	\let\@oddhead\@empty
	\let\@evenhead\@empty
	\def\@oddfoot{\centerline{\thepage}}%
	\let\@evenfoot\@oddfoot}
\newtheorem{theor}{Theorem}
\newtheorem{prop}[theor]{Proposition}
\newtheorem{lemma}[theor]{Lemma}
\newtheorem{cor}[theor]{Corollary}
\theoremstyle{definition}               
\newtheorem{defin}[theor]{Definition}
\newtheorem{ex}[theor]{Example}
\newtheorem{exs}[theor]{Examples}
\newtheorem{remk}[theor]{Remark}
\DeclareMathOperator{\id}{id}
\DeclareMathOperator{\sgn}{sgn}
\begin{document}
\begin{frontmatter}
	\title{Set-theoretical solutions of the pentagon equation on groups}
	\tnotetext[mytitlenote]{{This work was partially supported by the Dipartimento di Matematica e Fisica ``Ennio De Giorgi" - Universit\`{a} del
			Salento. The authors are members of GNSAGA (INdAM).}}
	\author [unile] {Francesco~CATINO\corref{cor1}}
	\ead{francesco.catino@unisalento.it}
	\cortext[cor1]{Corresponding author}
	\author [unile] {Marzia~MAZZOTTA}
	\ead{marzia.mazzotta@unisalento.it}
	\author [unile] {Maria Maddalena~MICCOLI}
	\ead{maddalena.miccoli@unisalento.it}
	\address[unile]{Dipartimento di Matematica e Fisica ``Ennio De Giorgi"\\
		Universit\`{a} del Salento\\
		Via Provinciale Lecce-Arnesano \\
		73100 Lecce (Italy)\\}
	\begin{abstract}
		Let $M$ be a set. A \emph{set-theoretical solution of the pentagon equation} on $M$ is a map $s:M\times M\longrightarrow M\times M$ such that
		\begin{equation*}
		s_{23}\, s_{13}\, s_{12}=s_{12}\, s_{23},
		\end{equation*}
		where $s_{12}=s\times \id_M$,  $s_{23}=\id_M\times s$ and  $s_{13}=(\id_M\times \tau)s_{12}(\id_M\times \tau)$, and $\tau$ is the flip map, i.e., the permutation on $M\times M$ given by $\tau(x,y)=(y,x)$, for all $x,y\in M$.\\
		In this paper we give a complete description of the set-theoretical solutions of the form $s(x,y)=(x\cdot y , x\ast y)$ when either $(M,\cdot)$ or $(M,\ast)$ is a group; moreover, we raise some questions.
	\end{abstract} 
	
	\begin{keyword}
		Pentagon equation \sep set-theoretical solution \sep group 
		\MSC[2010] 16W35 \sep 20G42 \sep 81R60 
			\end{keyword}
\end{frontmatter}

\section{Introduction}

\noindent Let $V$ be a vector space over a field $F$. A linear map $S:V\otimes V\rightarrow V\otimes V$ is said to be a \emph{solution of the pentagon equation} if 
\begin{equation*}\label{P1}
S_{12}S_{13}S_{23}=S_{23}S_{12},
\end{equation*}
where $S_{12}=S\otimes \id_V$, $S_{23}=\id_V\otimes S$ and $S_{13}=(\id_V\otimes T)\; S_{12}\;(\id_V\otimes T)$, where $T$ is the \emph{flip map} on $V\otimes V$, i.e., $T(v\otimes w)=w\otimes v$, for all $v,w\in V$.\\
Solutions of the pentagon equation appear in various contexts and with different terminologies. For instance, the canonical element in the Heisenberg double of an arbitrary Hopf algebra satisfies the pentagon equation \cite{Ka96}. If $\mathcal{H}$ is a Hilbert space, a unitary operator from $\mathcal{H}\otimes\mathcal{H}$ into itself is said to be multiplicative if it is a solution of the pentagon equation \cite{BaSk93}. According to Street \cite{Str98}, fixed a braided monoidal category $\mathcal{V}$ in the sense of Joyal-Street, an arrow $V:A \otimes A \rightarrow A \otimes A$ in $\mathcal{V}$ is said to be a fusion operator if it satisfies the pentagon relation. 
For more information about relationships between of the pentagon equation and other topics, see the recent paper of Dimakis and M\"uller-Hoissen \cite{DiMu15} along with the references therein. \\ 
A \emph{set-theoretical solution of the pentagon equation} on an arbitrary set $M$ is a map $s:M\times M\rightarrow M\times M$ which satisfies the ``reversed'' pentagon equation
\begin{equation*}
s_{23}\, s_{13}\, s_{12}=s_{12}\, s_{23},
\end{equation*}
where $s_{12}=s\times \id_M$,  $s_{23}=\id_M\times s$ and  $s_{13}=(\id_M\times \tau)\,s_{12}\,(\id_M\times \tau)$, and $\tau$ is the flip map, i.e., the permutation on $M\times M$ given by $\tau(x,y)=(y,x)$, for all $x,y\in M$.\\ 
A link between solutions and set-theoretical solutions of the pentagon equation is highlighted in the paper of Kashaev and Reshetikhin \cite{KaRe07}. If $M$ is a finite set,  $F$ a field and $V:=F^M$ the vector space of the functions from $M$ to $F$, then it is well-known that $V\otimes V$ is isomorphic to $F^{M\times M}$. Moreover, for any map $s:M\times M\rightarrow M\times M$ one can associate its pull-back $S$, i.e., the linear map $S:V\otimes V\rightarrow V\otimes V$ given by
\begin{equation*}
(S\varphi)(x,y)= \varphi(s(x,y)),
\end{equation*}
for every $\varphi\in F^{M\times M}$. Then, $S$ is a solution of the pentagon equation if and only if $s$ is a set-theoretical solution of the pentagon equation.\\
Recently, there have appeared several papers on set-theoretical solutions. See, for example, Zakrzewski \cite{Za92}, Baaj and Skandalis \cite{BaSk93,BaSk98}, Kashaev and Sergeev \cite{KaSe98}, Jiang and Liu \cite{JiLi05}, Kashaev and Reshetikhin \cite{KaRe07}, and Kashaev \cite{Ka11}.

For a map $s:M\times M\rightarrow M\times M$ define binary operations $\cdot$ and $\ast$ via
\begin{equation*}
s(x,y)=(x\cdot y\; , \; x\ast y), 
\end{equation*}
for all $x,y \in M$. One can easily see that the map $s$ is a set-theoretical solution of the pentagon equation if and only if the following conditions hold
	\begin{enumerate}
		\item[(1)] $(x \cdot y) \cdot z=x \cdot (y \cdot z)$,
		\item[(2)] $(x \ast y) \cdot ((x \cdot y) \ast z)=x \ast (y \cdot z)$,
		\item[(3)] $(x \ast y) \ast ((x \cdot y) \ast z)=y \ast z$,
	\end{enumerate}
	for all $x,y,z \in M$.\\
As noted by Kashaev and Sergeev \cite{KaSe98}, under the assumption that  $(M,\cdot)$ is a group, one obtains that the only \emph{invertible} set-theoretical solution $s$ of the pentagon equation on $M$ is given by $s(x,y)=(x\cdot y,\; y)$. \\
In this paper we give a complete description of all set-theoretical solutions of the pentagon equation of the form $s(x,y)=(x\cdot y\; , \; x\ast y)$ when either $(M,\cdot)$ or $(M,\ast)$ is a group. Finally, in the last section some questions are raised and partial answers are given to these.

\section{Basic results}
In this section, for the ease of the reader, we fix some notations and terminologies. Furthermore, we put together some available results and examples in the literature of set-theoretical solutions of the pentagon equation, without adding any new results.

\begin{defin}
Let $M$ be a set. A \emph{set-theoretical solution of the pentagon equation} on $M$ is a map $s:M\times M\longrightarrow M\times M$ such that
\begin{equation*}\label{eqpent}
s_{23}\, s_{13}\, s_{12}=s_{12}\, s_{23},
\end{equation*}
where $s_{12}=s\times \id_M$,  $s_{23}=\id_M\times s$ and  $s_{13}=(\id_M\times \tau)\,s_{12}\,(\id_M\times \tau)$, and $\tau$ is the flip map, i.e., the permutation on $M\times M$ given by $\tau(x,y)=(y,x)$, for all $x,y\in M$.\\ 
Instead, a \emph{set-theoretical solution of the reversed pentagon equation} on the set $M$ is a map $s:M\times M\longrightarrow M\times M$ such that
\begin{equation*}
s_{12}\, s_{13}\, s_{23}=s_{23}\, s_{12}.
\end{equation*}
Hereinafter, by \emph{solution} we mean a set-theoretical solution of the pentagon equation, whereas by \emph{reversed solution} we mean a set-theoretical solution of the reversed pentagon equation.
\end{defin}

We remark that a map $s:M\times M\longrightarrow M\times M$ is a solution if and only if $\tau s\tau$ is a reversed solution. Moreover, $s$ is an invertible solution if and only if $s^{-1}$ is a reversed solution.\\
Note that the identity map $\id_{M\times M}$ is a solution (and a reversed solution) on $M$, but the flip map $\tau$ is not a solution when $|M|>1$.

Here, we give some easy examples of solutions. We point out that some of the solutions are obtained in non-algebraic contexts.
\begin{exs}\label{exsKey}\hspace{1mm}\\
    	(1)\; (\emph{Kac-Takesaki's solutions}) Let $(M,\cdot)$ be a group. The maps $s,t:M \times M \rightarrow M \times M$ given by 
	    $$ s(x,y)=(x\cdot y, y) , \quad t(x,y)=(x, x^{-1}\cdot y),$$
	    for all $x,y \in M$, are invertible solutions on $M$.
	    
	    \vspace{2mm}\noindent
	    (2) \; Let $(M,\cdot)$ be a semigroup and $\gamma$ an idempotent endomorphism of $M$. Then, the map $s:M \times M \rightarrow M \times M$ given by 
        $$ s(x,y)=\left(x\cdot y,\gamma\left(y\right)\right),$$ 
	    for all $x,y \in M$, is a solution on $M$. 
	    In particular, if $e$ is an idempotent element of $(M,\cdot)$, then $s\left(x,y\right)= \left(x\cdot y,\; e\right)$ is a solution on $M$.
	    
	    \vspace{2mm}\noindent 
		(3)\; (\emph{Militaru's solution}, \cite{Mi98}) If $M$ is a set and $\alpha,\beta$ are idempotent maps from $M$ to itself such that $\alpha\beta=\beta\alpha$, then the map $s:M \times M \rightarrow M \times M$ given by 
		$$s(x,y)=\left(\alpha\left(x\right),\, \beta\left(y\right)\right),$$
		for all $x, y\in M$, is a solution and a reversed solution on $M$. 
	
        \vspace{2mm}
		Let $M$ be a group and $A,B$ two subgroups such that $A\cap B=\{1\}$ and $M=AB$. 
		Then, for every $x\in M$ there exists a unique couple $(a,b)\in A\times B$ such that $x=ab$.
		Let  $p_1:M \rightarrow A$ and  $p_2: M \rightarrow B$ be maps such that $x=p_1(x)\,p_2(x)$, for every $x\in M$.
		
		\vspace{2mm}\noindent 
    	(4) (\emph{Zakrzewski's solution}, \cite{Za92})  The map $s:M\times M\longrightarrow M\times M$ given by 
		$$
			s\left(x,y\right)=\left(p_2\left(yp_1\left(x\right)^{-1}\right)x,\; yp_1\left(x\right)^{-1}\right),
		$$
		for all $x, y \in M$, is an invertible solution on $M$.
		
		\vspace{2mm}\noindent 
		(5) (\emph{Baaj-Skandalis' solution}, \cite{BaSk98})  The map $s:M\times M\longrightarrow M\times M$ given by 
		$$
		s\left(x,y\right)=\left(xp_1\left(p_2\left(x\right)^{-1}y\right), \, p_2\left(x\right)^{-1}y\right)
		$$
		for all $x, y \in M$, is an invertible solution on $M$.
\end{exs}

In order to obtain different solutions, we give the classic definition of equivalent solutions below.
\begin{defin} Let $M, N$ be two sets, $s$ and $r$ solutions on $M$ and $N$ respectively. Then, $r$ and $s$ are said to be \textit{equivalent} if there exists a a bijection $\eta:M\rightarrow N $ such that $(\eta \times \eta)\;s= r \;(\eta \times \eta)$.
\end{defin}

As noted above, we remark that $s$ is a solution on a set $M$ if and only if $\tau s^{-1}\tau$ is also a on $M$. According to the definition of opposite operator of a multiplicative unitary operator given by Baaj and Skandalis \cite{BaSk93}, we give the following definition for a solution on an arbitrary set.  

\begin{defin} Let $s$ be an invertible solution on a set $M$. The solution $s^{op}:=\tau s^{-1}\tau$ on $M$ is called the \emph{opposite solution} of $s$.
\end{defin}

\begin{exs}\hspace{1mm}
	\begin{enumerate}
		\item  If $s$ and $t$ are Kac-Takesaki's solutions, then the opposite solutions of $s$ and $t$ are respectively
		\begin{center}
			$s^{op}(x,y)=(x,\; yx^{-1})$ \quad and \quad $t^{op}(x,y)=(yx,\; y)$.
		\end{center}
		\item  Zakrzewski and Baaj-Skandalis' solutions are opposite of each other.
	\end{enumerate}
\end{exs}

\vspace{3mm}

Baaj and Skandalis \cite{BaSk93} introduced  the concepts of commutativity and cocommutativity for multiplicative unitary operators defined on a Hilbert space. We translate these definitions for solutions on an arbitrary set. 
\begin{defin}
	Let $M$ be a set and $s$ a solution on $M$. Then, $s$ is said to be
	\begin{enumerate}
		\item \emph{commutative} if $s_{12}s_{13}=s_{13}s_{12}$, 
		\item \emph{cocommutative} if $s_{13}s_{23}=s_{23}s_{13}$.
	\end{enumerate} 
\end{defin}

According to Militaru \cite{Mi98}, a \emph{reversed solution} is commutative [resp. cocommutative] if the solution $\tau s\tau$ is commutative [resp. cocommutative]. Furthermore, if $s$ is an invertible solution on a set $M$, then $s$ is commutative [resp. cocommutative] if and only if $s^{op}$ is cocommutative [resp. commutative]. 

\vspace{3mm}

\begin{exs}\hspace{1mm}
	\begin{enumerate}
		\item Let $s$ and $t$ be Kac-Takesaki's solutions. Then, $s$ is cocommutative, whereas $t$ is commutative.
		\item Militaru's solutions are both commutative and cocommutative.
	\end{enumerate}
\end{exs}

\vspace{3mm}

\section{Description of the solutions on a group}
 In order to give a complete description of the solutions $s(x,y)=(x\cdot y\; , \; x\ast y)$ on a group $(M,\cdot)$, we slightly change this notation. We write $xy$ instead of $x \cdot y$ and $\theta_x(y)$ instead $x\ast y$, for all $x,y \in M$. Therefore, we give the following characterization for a solution $s$ on a set $M$, of which the proof is a routine computation. 
\begin{prop}\label{STR} Let $M$ be a set, $s: M \times M \rightarrow M \times M$ 
	a map and write
	\begin{center}
		$s(x,y)=\left(xy, \theta_x\left(y\right)\right)$,
	\end{center}
	where $\theta_x: M \rightarrow M$, for every $x \in M$. Then, $s$ is a solution on $M$ if and only if the following conditions hold
	\begin{align}
	(xy)z&=x(yz),\label{assoc}\\
	\theta_x(y)\theta_{xy}(z)&=\theta_x(yz),\label{omotheta}\\
	\theta_{\theta_x(y)}\theta_{xy}&=\theta_y,\label{idemtheta}
	\end{align}
	for all $x,y,z \in M$.
	Moreover, s is invertible if and only if for any pair $(x,y)\in M \times M$ there exists a unique pair $(u,z) \in M \times M$ such that
	\begin{center}
		$uz =x$, \quad $\theta_u(z)=y$.
	\end{center}
\end{prop}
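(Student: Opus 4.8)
The plan is to evaluate both sides of the pentagon equation on an arbitrary triple $(x,y,z)\in M\times M\times M$ and compare coordinates. First I would record the elementary formulas for the three ``legs''. Writing $s(a,b)=(ab,\theta_a(b))$, one reads off immediately that $s_{12}(a,b,c)=(ab,\theta_a(b),c)$ and $s_{23}(a,b,c)=(a,bc,\theta_b(c))$; for $s_{13}=(\id_M\times\tau)\,s_{12}\,(\id_M\times\tau)$ a one-line computation — flip the last two entries, apply $s$ to the first two, flip back — gives $s_{13}(a,b,c)=(ac,b,\theta_a(c))$.

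Next I would substitute these into the two composites. On the right-hand side, $s_{23}(x,y,z)=(x,yz,\theta_y(z))$ and then applying $s_{12}$ yields
\[
s_{12}\,s_{23}(x,y,z)=\bigl(x(yz),\ \theta_x(yz),\ \theta_y(z)\bigr).
\]
On the left-hand side, $s_{12}(x,y,z)=(xy,\theta_x(y),z)$; applying $s_{13}$ gives $\bigl((xy)z,\ \theta_x(y),\ \theta_{xy}(z)\bigr)$; and a final application of $s_{23}$ produces
\[
s_{23}\,s_{13}\,s_{12}(x,y,z)=\bigl((xy)z,\ \theta_x(y)\theta_{xy}(z),\ \theta_{\theta_x(y)}\bigl(\theta_{xy}(z)\bigr)\bigr).
\]

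Equating these two triples coordinatewise, for all $x,y,z$, gives exactly: the first coordinate $\Leftrightarrow$ \eqref{assoc}; the second coordinate $\Leftrightarrow$ \eqref{omotheta}; the third coordinate $\Leftrightarrow$ \eqref{idemtheta}, since $\theta_{\theta_x(y)}(\theta_{xy}(z))=\theta_y(z)$ holding for all $z$ is the pointwise form of $\theta_{\theta_x(y)}\theta_{xy}=\theta_y$. Conversely, \eqref{assoc}--\eqref{idemtheta} clearly force the two triples to agree, so $s$ is a solution if and only if \eqref{assoc}--\eqref{idemtheta} hold. (This is of course the same as the conditions (1)--(3) of the introduction after the substitutions $x\cdot y=xy$ and $x\ast y=\theta_x(y)$.) For the last assertion I would simply note that $s$ is invertible iff it is a bijection of $M\times M$, i.e.\ every $(x,y)$ has a unique preimage, and that $(u,z)$ is a preimage of $(x,y)$ precisely when $s(u,z)=(uz,\theta_u(z))=(x,y)$, that is, $uz=x$ and $\theta_u(z)=y$.

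The only mildly delicate point in the whole argument is the correct evaluation of $s_{13}$ on a triple, because the two conjugating flips must be tracked carefully and it is easy to misplace an entry; once that formula is pinned down, everything else is bookkeeping, which is why the statement can fairly be called a routine computation.
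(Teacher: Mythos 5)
Your computation is correct and is precisely the ``routine computation'' the paper alludes to without writing out: the paper gives no proof of this proposition, and your coordinatewise evaluation of $s_{23}\,s_{13}\,s_{12}$ and $s_{12}\,s_{23}$ on triples (including the correct formula $s_{13}(a,b,c)=(ac,b,\theta_a(c))$) is the intended argument. The handling of the invertibility clause as the existence and uniqueness of a preimage is likewise exactly what is meant, so there is nothing to add.
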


\vspace{3mm}

Now, we present a useful tool to construct new solutions on a group, by examining its normal subgroups.
\begin{prop}\label{propmu}
	Let $M$ be a group, $K$ a normal subgroup of $M$, and $R$ a system of representatives of $M/K$ such that $1 \in R$.  If $\mu: M \rightarrow R$ is a map such that $\mu(x) \in Kx$, for every $x \in M$, then the map $s:M\times M\longrightarrow M\times M$ given by 
	\begin{equation*}
		s(x,y)=\left(xy, \mu\left(x\right)^{-1}\mu\left(xy\right)\right),
	\end{equation*}
 for all $x,y \in M$, is a solution on $M$.
\end{prop}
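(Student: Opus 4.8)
The plan is to apply the characterization of Proposition~\ref{STR}: writing $\theta_x(y) := \mu(x)^{-1}\mu(xy)$ for all $x,y\in M$, it suffices to check identities \eqref{assoc}, \eqref{omotheta}, and \eqref{idemtheta}. Identity \eqref{assoc} is nothing but associativity of the group $M$. Identity \eqref{omotheta} is a one-line telescoping cancellation,
\[
\theta_x(y)\,\theta_{xy}(z)=\mu(x)^{-1}\mu(xy)\,\mu(xy)^{-1}\mu(xyz)=\mu(x)^{-1}\mu(xyz)=\theta_x(yz),
\]
and uses nothing about $K$ other than that $\mu$ is a well-defined map.

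The substance of the argument is identity \eqref{idemtheta}, and this is exactly where the normality of $K$ is used. First I would record the elementary properties of $\mu$: since $R$ meets each right coset of $K$ in a single point and $\mu(x)\in Kx$, the element $\mu(x)$ is the unique representative of $Kx$; in particular $K\mu(x)=Kx$ and $\mu$ is constant on right cosets of $K$. The key claim is then that $\mu(x)^{-1}\mu(xy)\in Ky$ for all $x,y\in M$: writing $\mu(x)=k_1x$ and $\mu(xy)=k_2xy$ with $k_1,k_2\in K$, one gets $\mu(x)^{-1}\mu(xy)=\bigl(x^{-1}k_1^{-1}k_2x\bigr)y$, and $x^{-1}k_1^{-1}k_2x\in K$ precisely because $K\trianglelefteq M$. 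Hence $\mu\bigl(\theta_x(y)\bigr)=\mu(y)$, and, replacing $y$ by $yz$ (so that $\mu\bigl(x(yz)\bigr)=\mu(xyz)$), also $\mu\bigl(\mu(x)^{-1}\mu(xyz)\bigr)=\mu(yz)$. Combining these with \eqref{omotheta},
\[
\theta_{\theta_x(y)}\bigl(\theta_{xy}(z)\bigr)=\mu\bigl(\theta_x(y)\bigr)^{-1}\mu\bigl(\theta_x(y)\,\theta_{xy}(z)\bigr)=\mu(y)^{-1}\mu\bigl(\mu(x)^{-1}\mu(xyz)\bigr)=\mu(y)^{-1}\mu(yz)=\theta_y(z),
\]
which is exactly \eqref{idemtheta}.

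The only genuine obstacle is spotting that $\mu(x)^{-1}\mu(xy)$ falls back into the coset $Ky$; once that is established, everything reduces to cancellation and the coset-invariance of $\mu$. Invertibility is not asserted in the statement, so the last clause of Proposition~\ref{STR} plays no role here.
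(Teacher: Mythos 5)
Your proposal is correct and follows essentially the same route as the paper: associativity and \eqref{omotheta} are dispatched by telescoping cancellation, and \eqref{idemtheta} is reduced to the key observation that normality of $K$ forces $\mu(x)^{-1}\mu(xy)\in Ky$, whence $\mu\bigl(\theta_x(y)\bigr)=\mu(y)$ by uniqueness of coset representatives. Your phrasing of that step (working with $\theta_x(y)\in Ky$ directly rather than with its inverse in $Ky^{-1}$, as the paper does) is a minor cosmetic simplification, not a different argument.
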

\begin{proof}
In order to prove that $s$ is a solution, we shall prove only \eqref{idemtheta} of \cref{STR}, since conditions (1) and \eqref{omotheta} are straightforward. By hypothesis, if $x\in M$, since $K$ is a normal subgroup of $M$, it follows that $\mu(x)^{-1} \in Kx^{-1}$. Now, we compare
\begin{align*}
\theta_{\theta_x(y)}\theta_{xy}(z):&= \mu\left(\mu\left(x\right)^{-1} \mu\left(x y\right)\right)^{-1}\mu\left(\mu\left(x\right)^{-1} \mu\left(xy\right)  \mu\left(xy\right)^{-1}  \mu\left(xyz\right)\right)\\
&=\mu\left(\mu\left(x\right)^{-1} \mu\left(xy\right)\right)^{-1} \mu\left(\mu\left(x\right)^{-1}  \mu\left(x y z\right)\right)
\end{align*}
and 
\begin{center}
$\theta_y(z):=\mu(y)^{-1} \mu(y z)$. 
\end{center}
Note that
\begin{center}
	$\mu\left(\mu\left(x\right)^{-1} \mu\left(x y\right)\right)^{-1} \in K\left(\mu\left(x\right)^{-1}  \mu\left(x  y\right)\right)^{-1}=K\left(\mu\left(x y\right)^{-1} \mu(x)\right)$
\end{center}
and $\mu(xy)^{-1} \mu(x) \in Ky^{-1}$. Thus,
\begin{center}
	$\mu\left(\mu\left(x\right)^{-1}\mu\left(x y\right)\right)^{-1} \in Ky^{-1}$.
\end{center}
Hence, since also $\mu(y)^{-1} \in Ky^{-1}$, we get \begin{center}
$\mu\left(\mu\left(x\right)^{-1} \mu\left(x y\right)\right)^{-1}=\mu(y)^{-1}$.
\end{center}
Analogously, one can prove that $\mu\left(\mu\left(x\right)^{-1}\mu\left(x y z\right)\right)$ and $\mu(y)^{-1} \mu(y z)$ are elements of $K(y z)$ and so they shall be equal. Hence, $\theta_x(y)\theta_{xy}(z)=\theta_y(z)$. \\
Therefore, $s(x,y)=\left(xy, \mu\left(x\right)^{-1}\mu\left(xy\right)\right)$ is a solution on $M$.
\end{proof}

\vspace{3mm}

Thus, a lot of examples of solutions on groups may be obtained using the \cref{propmu} above. Some very easy examples of solutions on the symmetric groups are the following.

\begin{ex} Let $n \geq 3$, $\mathcal{S}_n$ the symmetric group of order $n$, $\mathcal{A}_n$ the alternating group of degree $n$, and $R=\lbrace 1, \pi \rbrace$ a system of representatives of $\mathcal{S}_n/\mathcal{A}_n$, where $\pi$ is a transposition of $\mathcal{S}_n$. Then, the map $\mu: \mathcal{S}_n \to R$ given by
\begin{center}
 $\mu(\alpha)=\begin{cases}
   \pi \qquad &\text{if} \;\sgn(\alpha)=-1 \\
   \id_{\mathcal{S}_n} \quad&\text{if}\; \sgn(\alpha)=1
   \end{cases},$
\end{center}
for every $\alpha \in \mathcal{S}_n$, satisfies the hypothesis of Proposition \ref{propmu}. Therefore, the map given by $s(\alpha,\beta)=\left(\alpha\beta,\mu\left(\alpha\right)^{-1}\mu\left(\alpha\beta\right)\right)$ is a solution on $\mathcal{S}_n$.
\end{ex}

\vspace{3mm}

Now, we want to show that all the solutions on a group $M$ are obtained as in \cref{propmu}.
For this purpose, we need to start by proving several lemmas, in which, for every $x\in M$, interesting properties of the map $\theta_x$ are studied and, in particular, of the map $\theta_1$. Indeed, if $s(x,y)=(xy, \,\theta_x(y))$ is a solution on a group $M$, by condition \eqref{omotheta} in \cref{STR}, we get
\begin{equation}\label{thetax}
	\theta_x(y)=\theta_1(x)^{-1}\theta_1(xy),
\end{equation}
for all $x, y \in M$. 

\begin{lemma}\label{proptheta} Let $s(x,y)=(xy, \,\theta_x(y))$ be a solution on a group $M$. Then, the following conditions hold
\begin{enumerate}
\item$\theta_x(1)=1$,
\item$\theta_1\theta_x=\theta_1$,
\item $\theta_1(x)^{-1}=\theta_x(x^{-1})$,
\end{enumerate}
for every $x \in M$.
\begin{proof} The statements easily follow by conditions \eqref{assoc}-\eqref{idemtheta} in \cref{STR}.
Indeed, by \eqref{omotheta}, we obtain $\theta_x(1) \; \theta_x(1) = \theta_x(1)$ and so $\theta_x(1)=1$, for every $x \in M$. Moreover, by condition \eqref{idemtheta}, we obtain
 $\theta_1\theta_x=\theta_{\theta_x(1)}\theta_{x}=\theta_1$.
Finally, by \eqref{omotheta}, we get
\begin{center}
$\theta_x(x^{-1})\;\theta_1(x)=\theta_x(x^{-1})\;\theta_{x x^{-1}}(x)=\theta_x(x^{-1}  x)=\theta_x(1)=1$.
\end{center}
Therefore, $\theta_1(x)^{-1}=\theta_x(x^{-1})$.
\end{proof}
\end{lemma}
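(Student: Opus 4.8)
The plan is to obtain all three statements as short consequences of the defining identities \eqref{assoc}--\eqref{idemtheta} of \cref{STR}, using only that $M$ is a group, so that cancellation and inversion are freely available; no further structure is needed, and the three items should be established in the order (1), (2), (3), since each relies on the previous one.

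For (1), I would put $y=z=1$ in the ``homomorphism-type'' identity \eqref{omotheta}, which gives $\theta_x(1)\,\theta_x(1)=\theta_x(1)$; cancelling in $M$ yields $\theta_x(1)=1$ for every $x\in M$, and in particular the special case $\theta_1(1)=1$. For (2), I would substitute the pair $(x,1)$ for $(x,y)$ in the ``idempotency-type'' identity \eqref{idemtheta}, obtaining $\theta_{\theta_x(1)}\,\theta_x=\theta_1$; since $\theta_x(1)=1$ by (1), the left-hand side collapses to $\theta_1\theta_x$, so $\theta_1\theta_x=\theta_1$. For (3), I would use \eqref{omotheta} once more, now with $y=x^{-1}$ and $z=x$: this reads $\theta_x(x^{-1})\,\theta_{xx^{-1}}(x)=\theta_x(x^{-1}x)$, i.e.\ $\theta_x(x^{-1})\,\theta_1(x)=\theta_x(1)=1$ by (1), whence $\theta_x(x^{-1})=\theta_1(x)^{-1}$. (Alternatively, (3) drops out of the formula $\theta_x(y)=\theta_1(x)^{-1}\theta_1(xy)$ recorded in \eqref{thetax} by taking $y=x^{-1}$ and invoking $\theta_1(1)=1$.)

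There is no genuine obstacle here: each step is a one-line computation. The only thing requiring a little care is the bookkeeping, namely choosing the right specializations of \eqref{omotheta} and \eqref{idemtheta} and making sure (1) is in hand before it is used in (2) and (3).
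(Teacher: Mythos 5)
Your proposal is correct and follows exactly the paper's argument: the same specializations of \eqref{omotheta} (with $y=z=1$ for item (1) and $y=x^{-1}$, $z=x$ for item (3)) and of \eqref{idemtheta} (with $y=1$ for item (2)), in the same order. Nothing to add.
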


\begin{remk} 
	In general, we observe that $\theta_1$ is not a homomorphism. \\
	For example, let $M=\left\langle a\right\rangle$ be the cyclic group of order $6$ and $\theta_1:M \rightarrow M$ the map given by
\begin{center}
	$\theta_1(1)=1$, \; $\theta_1(a)=a$, \; $\theta_1(a^2)=1$, \;$\theta_1(a^3)=a$, \; $\theta_1(a^4)=1$, \; $\theta_1(a^5)=a$.
\end{center}
Considering \eqref{thetax}, we have that $\theta_x(y)=\theta_1(x)^{-1}\theta_1(xy)$, for all $x,y \in M$, and then one can see that $s(x,y)=\left(xy,\theta_x(y)\right)$ is a solution on $M$. But, for instance,
$\theta_1(a^4)=1$, whereas $\theta_1(a)\;\theta_1(a^3)=a^2$. Therefore, $\theta_1$ is not a homomorphism.
\end{remk}

\noindent In spite of that, we are able to show the following result.

\begin{lemma} \label{Knormal}
Let $s(x,y)=\left(xy, \theta_x(y)\right)$ be a solution on a group $M$. Then, the subset of $M$
\begin{center}
$K:= \lbrace x \; |\; x \in M, \;\theta_1(x)=1 \rbrace$,
\end{center}
is a normal subgroup of $M$, called the \emph{kernel of $s$}.
\begin{proof}
If $x, y \in K$, by \eqref{omotheta} in \cref{STR}, we have that
$$\theta_1(x y)=\theta_1(x) \;\theta_x(y)=\theta_x(y).$$
Furthermore, $\theta_x(y) \in K$. Indeed, by 2. in \cref{proptheta}, we get 
\begin{center}
$\theta_1(\theta_x(y))=\theta_1(y)=1$
\end{center} 
and, since $\theta_1$ is idempotent, we obtain
\begin{center}
$\theta_1(xy)=\theta_1\theta_1(x y)=\theta_1(\theta_x(y))=1$.
\end{center}
Hence, $x y$ is in $K$. Moreover, if $x \in K$, then $\theta_x(x^{-1})=\theta_1(x)^{-1}=1$ and so, by 2. in \cref{proptheta}, we have
\begin{center}
$\theta_1(x^{-1})=\theta_1\theta_x(x^{-1})=\theta_1(1)=1$.
\end{center}
Therefore, $x^{-1}$ is in $K$ and so $K$ is a subgroup of $M$. \\
Finally, we prove that $K$ is normal, that means $\theta_1(x^{-1}  k  x)=1$, for every $x \in M$ and $k \in K$. To this end, we observe that
\begin{equation}\label{thetak}
\theta_x(k)=1.
\end{equation} 
Indeed, by \eqref{idemtheta} in \cref{STR} and by 1. in \cref{proptheta}, we get
\begin{center}
$\theta_x(k)=\theta_{\theta_{x^{-1}}(x)}\theta_{x^{-1} x}(k)=\theta_{\theta_{x^{-1}}(x)}\theta_1(k)=\theta_{\theta_{x^{-1}}(x)}(1)=1$.
\end{center}
Moreover,
\begin{equation}\label{thetakx}
\theta_k(x)=\theta_1(x),
\end{equation}
because $\theta_k(x)=\theta_{\theta_1(k)} \theta_k(x)=\theta_1\theta_k(x)=\theta_1(x)$.\\
Now, by \eqref{idemtheta} in \cref{STR} and condition \eqref{thetakx} above, we obtain
\begin{align*}
\theta_{x^{-1}k}\left(x\right)&=\theta_{\theta_x\left(x^{-1} k\right)}\theta_{x x^{-1}  k}\left(x\right)=\theta_{\theta_x\left(x^{-1}\right)  \theta_1\left(k\right)}\theta_k\left(x\right)=\theta_{\theta_x\left(x^{-1}\right)}\theta_k\left(x\right)\\
&=\theta_{\theta_x(x^{-1})}\theta_1(x)=\theta_{x^{-1}}(x)
\end{align*}
and, since $\theta_{x^{-1}}(x)=\theta_1(x^{-1})^{-1}$ by 3. in \cref{proptheta}, we obtain
\begin{align*}
\theta_1(x^{-1}  k \ x)&=\theta_1(x^{-1})  \theta_{x^{-1}}(k  x)= \theta_1(x^{-1}) \theta_{x^{-1}}(k) \theta_{x^{-1}  k}(x)=\theta_1(x^{-1}) \theta_{x^{-1}}(x)\\
&=\theta_1(x^{-1}) \; \theta_1(x^{-1})^{-1}=1.
\end{align*}
Therefore, $K$ is a normal subgroup of $M$.
\end{proof}
\end{lemma}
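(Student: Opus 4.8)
The plan is to establish, in order, that $K$ is closed under products, closed under inverses, and stable under conjugation. The only ingredients will be the identities \eqref{assoc}--\eqref{idemtheta} of \cref{STR}, the three properties of \cref{proptheta}, and one immediate consequence of the latter: taking $x=1$ in property~2 of \cref{proptheta} shows that $\theta_1$ is \emph{idempotent}, $\theta_1\theta_1=\theta_1$. Note also that $1\in K$ since $\theta_1(1)=1$ by property~1 of \cref{proptheta}, so $K\neq\emptyset$.

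For the subgroup part I would argue as follows. If $x,y\in K$, then \eqref{omotheta} with first index $1$ gives $\theta_1(xy)=\theta_1(x)\,\theta_x(y)=\theta_x(y)$, because $\theta_1(x)=1$; applying $\theta_1$ again and using property~2 of \cref{proptheta} together with idempotence of $\theta_1$ yields $\theta_1(xy)=\theta_1\bigl(\theta_1(xy)\bigr)=\theta_1\bigl(\theta_x(y)\bigr)=\theta_1(y)=1$, so $xy\in K$. If $x\in K$, then property~3 of \cref{proptheta} gives $\theta_x(x^{-1})=\theta_1(x)^{-1}=1$, and applying $\theta_1$ with property~2 gives $\theta_1(x^{-1})=\theta_1\theta_x(x^{-1})=\theta_1(1)=1$, so $x^{-1}\in K$. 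Hence $K$ is a subgroup.

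The real work is normality, i.e.\ showing $\theta_1(x^{-1}kx)=1$ for all $x\in M$, $k\in K$. I would first record two auxiliary facts. \emph{(a)} For every $x\in M$ and $k\in K$ one has $\theta_x(k)=1$: specialize \eqref{idemtheta} at the pair $(x^{-1},x)$ to get $\theta_{\theta_{x^{-1}}(x)}\,\theta_1=\theta_x$, apply this to $k$, and use $\theta_1(k)=1$ and property~1 of \cref{proptheta}. \emph{(b)} For every $k\in K$ one actually has $\theta_k=\theta_1$ as maps: \eqref{idemtheta} at $(1,k)$ reads $\theta_{\theta_1(k)}\theta_k=\theta_k$, which is $\theta_1\theta_k=\theta_k$ since $k\in K$, and comparing with property~2 of \cref{proptheta} ($\theta_1\theta_k=\theta_1$) gives $\theta_k=\theta_1$. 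The key intermediate identity is then $\theta_{x^{-1}k}(x)=\theta_{x^{-1}}(x)$: apply \eqref{idemtheta} at $(x,x^{-1}k)$ to obtain $\theta_{x^{-1}k}(x)=\theta_{\theta_x(x^{-1}k)}\bigl(\theta_k(x)\bigr)$; simplify the subscript using \eqref{omotheta} and $\theta_1(k)=1$ to $\theta_x(x^{-1}k)=\theta_x(x^{-1})=\theta_1(x)^{-1}$ (property~3); replace $\theta_k(x)$ by $\theta_1(x)$ using (b); and finally recognise $\theta_{\theta_x(x^{-1})}\bigl(\theta_1(x)\bigr)=\theta_{x^{-1}}(x)$ from \eqref{idemtheta} at $(x,x^{-1})$. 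Granting this, the conclusion comes from expanding $\theta_1(x^{-1}kx)$ with \eqref{omotheta} twice: $\theta_1(x^{-1}kx)=\theta_1(x^{-1})\,\theta_{x^{-1}}(kx)=\theta_1(x^{-1})\,\theta_{x^{-1}}(k)\,\theta_{x^{-1}k}(x)$, which by (a) and the intermediate identity equals $\theta_1(x^{-1})\,\theta_{x^{-1}}(x)=\theta_1(x^{-1})\,\theta_1(x^{-1})^{-1}=1$, the last step again being property~3 of \cref{proptheta}.

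The subgroup part is pure bookkeeping. The part I expect to be the main obstacle is the normality argument, and inside it the identity $\theta_{x^{-1}k}(x)=\theta_{x^{-1}}(x)$: here one must strip the $K$-factor off the subscript $x^{-1}k$, and this is exactly where \eqref{idemtheta} plays an essential role, in combination with the two facts that $\theta_x$ annihilates $K$ and that subscripts lying in $K$ behave like $1$. Choosing the correct instantiations of \eqref{idemtheta} — the pairs $(x^{-1},x)$, $(1,k)$, $(x,x^{-1}k)$ and $(x,x^{-1})$ — is the only genuine subtlety; everything else is forced.
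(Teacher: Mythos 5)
Your proposal is correct and follows essentially the same route as the paper: the same idempotence trick for closure under products, the same auxiliary facts $\theta_x(k)=1$ and $\theta_k=\theta_1$ for $k\in K$, and the same key identity $\theta_{x^{-1}k}(x)=\theta_{x^{-1}}(x)$ obtained from the same instantiations of \eqref{idemtheta}. The only cosmetic difference is that you state $\theta_k=\theta_1$ as an equality of maps, where the paper records it pointwise.
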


\vspace{3mm}

In the following result, we prove some interesting properties of the kernel $K$ of a solution $s$ on a group $M$.

\begin{lemma}\label{propkern}
Let  $s(x,y)=(x y, \; \theta_x(y))$ be a solution on a group $M$ and $K$ the kernel of $s$. Then, the following conditions
\begin{enumerate}
\item $\theta_x(ky)=\theta_x(y)$,
\item $\theta_{kx}(y)=\theta_x(y)$,
\item$\theta_1(Kx) \subseteq Kx $,
\end{enumerate}
hold, for all $x,y \in M$ and for every $k \in K$.
\begin{proof} Let $x, y \in M$ and $k \in K$.
\begin{enumerate}
\item Considering \eqref{thetak} we have that $\theta_x(k)=1$, and by condition \eqref{thetakx} in \cref{Knormal}, we get
\begin{center}
$\theta_x(ky)=\theta_x(k) \theta_{xk}(y)=\theta_{\theta_{x^{-1}}(xk)}\theta_k(y)=\theta_{\theta_{x^{-1}}(x)\theta_1(k)}\theta_1(y)=\theta_x(y).$
\end{center}
\item By \eqref{omotheta} in \cref{STR} and condition \eqref{thetakx} in \cref{Knormal}, we have 
\begin{center}
$\theta_{kx}(y)=\theta_k(x)^{-1} \theta_k(x y)=\theta_1(x)^{-1}\theta_1(x y)=\theta_x(y)$.
\end{center}
\item Finally, we show that $x^{-1}\theta_1(kx) \in K$. But, since $\theta_1(kx)=\theta_1(x)$, we have to prove that
\begin{center}
$\theta_1\left(x^{-1}\theta_1\left(x\right)\right)=1$.
\end{center}
By condition 2. in \cref{proptheta}, we compute
\begin{align*}
\theta_1\left(x^{-1}\theta_1\left(x\right)\right)&=\theta_1 \theta_x\left(x^{-1}\theta_1\left(x\right)\right)=\theta_1 \left(\theta_x\left(x^{-1}\right) \theta_{xx^{-1}}\theta_1\left(x\right)\right)=\theta_1 \left(\theta_x\left(x^{-1}\right) \theta_1\left(x\right)\right)\\
&=\theta_1\left(\theta_1\left(x\right)^{-1}\theta_1\left(x\right)\right)=1.
\end{align*}
Therefore, $\theta_1(xk)$ is in $Kx$.
\end{enumerate}
\end{proof}
\end{lemma}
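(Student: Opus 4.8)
The plan is to obtain all three identities by direct manipulation of the structure equations \eqref{omotheta} and \eqref{idemtheta} of \cref{STR}, together with relation \eqref{thetax}, the properties in \cref{proptheta}, the defining property $\theta_1(k)=1$ of $k\in K$, and the two facts $\theta_x(k)=1$ (equation \eqref{thetak}) and $\theta_k=\theta_1$ on $M$ (equation \eqref{thetakx}) already established in the proof of \cref{Knormal}. Throughout, $x,y\in M$ and $k\in K$. For item~2, I would start from \eqref{omotheta} applied to the factorisation $(kx)y=k(xy)$, which gives $\theta_k(x)\,\theta_{kx}(y)=\theta_k(xy)$, hence $\theta_{kx}(y)=\theta_k(x)^{-1}\theta_k(xy)$; replacing $\theta_k$ by $\theta_1$ via \eqref{thetakx} and then invoking \eqref{thetax} yields $\theta_{kx}(y)=\theta_1(x)^{-1}\theta_1(xy)=\theta_x(y)$.

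For item~1, I would again split with \eqref{omotheta}, this time as $\theta_x(ky)=\theta_x(k)\,\theta_{xk}(y)$, and use $\theta_x(k)=1$ from \eqref{thetak} to reduce the problem to $\theta_{xk}(y)=\theta_x(y)$. Since the index $xk$ sits on the ``wrong'' side, no one-step reduction is available; instead I would route through \eqref{idemtheta} applied to the pair $(x^{-1},xk)$, noting $x^{-1}\cdot xk=k$, to get $\theta_{xk}=\theta_{\theta_{x^{-1}}(xk)}\,\theta_{k}$. Then $\theta_{x^{-1}}(xk)=\theta_{x^{-1}}(x)\,\theta_1(k)=\theta_{x^{-1}}(x)$ by \eqref{omotheta} and $\theta_1(k)=1$, while $\theta_k=\theta_1$ on $M$ by \eqref{thetakx}, so $\theta_{xk}=\theta_{\theta_{x^{-1}}(x)}\,\theta_1$; a second application of \eqref{idemtheta}, now to the pair $(x^{-1},x)$, identifies this with $\theta_x$. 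Evaluating at $y$ finishes item~1.

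For item~3, I would first observe that every element $kx$ of $Kx$ has the same $\theta_1$-image as $x$: indeed $\theta_1(kx)=\theta_1(k)^{-1}\theta_1(kx)=\theta_k(x)=\theta_1(x)$ by \eqref{thetax} and \eqref{thetakx}. Hence the inclusion $\theta_1(Kx)\subseteq Kx$ reduces to $\theta_1(x)\in Kx$, i.e.\ to $\theta_1\bigl(x^{-1}\theta_1(x)\bigr)=1$. Using $\theta_1=\theta_1\theta_x$ (2.\ in \cref{proptheta}), I would rewrite the left side as $\theta_1\bigl(\theta_x(x^{-1}\theta_1(x))\bigr)$, expand the inner term by \eqref{omotheta} as $\theta_x(x^{-1})\,\theta_1\bigl(\theta_1(x)\bigr)$, and collapse it to $\theta_1(x)^{-1}\theta_1(x)=1$ using $\theta_x(x^{-1})=\theta_1(x)^{-1}$ (3.\ in \cref{proptheta}) and the idempotency of $\theta_1$ (the case $x=1$ of 2.\ in \cref{proptheta}); finally $\theta_1(1)=1$ (1.\ in \cref{proptheta}) gives the claim.

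The only genuinely delicate point is the handling of the ``wrong-side'' index in item~1, equivalently the identity $\theta_{xk}=\theta_x$ hidden inside it: $\theta_{xk}$ cannot be simplified in one step, and the detour must be made through \eqref{idemtheta} applied twice, with the pairs $(x^{-1},xk)$ and $(x^{-1},x)$ chosen precisely so that the products inside telescope to $k$ and to $1$. Items~2 and~3 are then essentially bookkeeping with the structure equations and \cref{proptheta}.
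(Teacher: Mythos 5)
Your proposal is correct and follows essentially the same route as the paper: each item is handled by the same splittings via \eqref{omotheta}, the same double application of \eqref{idemtheta} with the pairs $(x^{-1},xk)$ and $(x^{-1},x)$ for item 1, and the same reduction of item 3 to $\theta_1\left(x^{-1}\theta_1\left(x\right)\right)=1$ via $\theta_1=\theta_1\theta_x$. No substantive differences.
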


Now, we are able to prove the main result of this paper, in which we prove the converse of \cref{propmu}.

\begin{theor}\label{teof}
		Let $M$ be a group, $K$ a normal subgroup of $M$, and $R$ a system of representatives of $M/K$ such that $1 \in R$.  If $\mu: M \rightarrow R$ is a map such that $\mu(x) \in Kx$, for every $x \in M$, then the map $s:M\times M\longrightarrow M\times M$ given by 
	\begin{equation*}
	s(x,y)=\left(xy, \mu\left(x\right)^{-1}\mu\left(xy\right)\right),
	\end{equation*}
	for all $x,y \in M$, is a solution on $M$.\\
    Conversely, if $s(x,y)=(xy,\theta_x(y))$ is a solution on $M$, then there exists a normal subgroup $K$ of $M$ such that $\theta_1(M)$ is a system of representatives of $M/K$, $1 \in \theta_1(M)$, $\theta_1(x) \in Kx$, for every $x\in M$, and 
    \begin{equation*}
    s(x,y)=\left(xy, \ \theta_1\left(x\right)^{-1}\theta_1\left(xy\right)\right),
    \end{equation*}
    for all $x, y \in M$.
\end{theor}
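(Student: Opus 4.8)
The forward implication of the theorem is precisely \cref{propmu}, so nothing new is needed there, and the whole task is the converse. Almost all of the work has already been done in \cref{proptheta}, \cref{Knormal} and \cref{propkern}, so the plan is to assemble those facts. I would take $K$ to be the kernel of $s$ introduced in \cref{Knormal}, which is already known to be a normal subgroup of $M$. The final displayed formula $s(x,y)=(xy,\theta_1(x)^{-1}\theta_1(xy))$ is nothing but \eqref{thetax}, which follows from \eqref{omotheta} in \cref{STR}, so it requires no further argument; and $1\in\theta_1(M)$ is immediate since $\theta_1(1)=1$ by part 1 of \cref{proptheta}. Hence the only real content is to check that $\theta_1(x)\in Kx$ for every $x\in M$ and that $\theta_1(M)$ is a system of representatives of $M/K$.

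For the inclusion $\theta_1(x)\in Kx$ I would simply invoke part 3 of \cref{propkern}, which gives $\theta_1(Kx)\subseteq Kx$; since $x\in Kx$ this yields $\theta_1(x)\in Kx$ at once. In particular every coset of $K$ meets $\theta_1(M)$, so $\theta_1(M)$ maps onto $M/K$ under the canonical projection.

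It then remains to see that each coset $Kx$ contains \emph{exactly} one element of $\theta_1(M)$. First I would record that $\theta_1$ is constant on $Kx$: for $k\in K$ one has $\theta_1(kx)=\theta_1(k)\,\theta_k(x)=\theta_k(x)=\theta_1(x)$, using \eqref{omotheta} in \cref{STR}, the definition of $K$ (so $\theta_1(k)=1$), and \eqref{thetakx}. Next, $\theta_1$ is idempotent, which is part 2 of \cref{proptheta} evaluated at $x=1$, so the elements of $\theta_1(M)$ are exactly the fixed points of $\theta_1$. Combining these, if $\theta_1(x)$ and $\theta_1(y)$ lie in the same coset of $K$ then $\theta_1$ is constant on that coset and fixes both points, whence $\theta_1(x)=\theta_1(\theta_1(x))=\theta_1(\theta_1(y))=\theta_1(y)$. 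Thus $\theta_1(M)$ is a transversal of $M/K$ containing $1$, with $\theta_1(x)\in Kx$, and since $\theta_x(y)=\theta_1(x)^{-1}\theta_1(xy)$ the solution has the claimed form.

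I do not expect a genuine obstacle in the proof of the theorem itself: the hard work was done in the lemmas, chiefly the normality of $K$ in \cref{Knormal} and the coset-stability statement in part 3 of \cref{propkern}. Within the present argument the only step needing any care is the verification that $\theta_1(M)$ is a true system of representatives, and that rests entirely on $\theta_1$ being simultaneously idempotent and constant on the cosets of $K$.
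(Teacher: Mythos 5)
Your proposal is correct and follows essentially the same route as the paper: take $K$ to be the kernel of $s$ from \cref{Knormal}, get $\theta_1(x)\in Kx$ from part 3 of \cref{propkern}, and deduce uniqueness of representatives from the fact that $\theta_1$ is constant on cosets of $K$ (part 1 of \cref{propkern} at $x=1$). The only cosmetic difference is that you settle uniqueness via idempotency of $\theta_1$, whereas the paper writes $x=ky$ and applies $\theta_1(ky)=\theta_1(y)$ directly; both arguments rest on the same two lemmas.
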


\begin{proof}The first part is proved in \cref{propmu}. Now, let $s(x,y)=\left(xy,\theta_x\left(y\right)\right)$ be a solution on $M$. In order to prove the converse, we recall that by condition \eqref{thetax}
\begin{equation*}
\theta_x(y)= \theta_1(x)^{-1} \; \theta_1(x  y),
\end{equation*}
for all $x, y \in M$. Moreover, let $K$ be the kernel of $s$, that is a normal subgroup of $M$, as we proved in \cref{Knormal}. Then, $\theta_1(M)$ is a system of representatives of $M/K$ that contains $1$. Indeed, $1=\theta_1(1)$ by 1. \cref{proptheta}. Furthermore, by 3. in \cref{propkern}, we have that
\begin{center}
$\theta_1(x)\in \theta_1(M) \cap Kx$,
\end{center} 
for every $x \in M$. Now, if $\theta_1(y)$ is another element of $\theta_1(M) \cap Kx$, since $\theta_1(y) \in  Ky$, we get $xy^{-1}\in K$. It follows that there exists $k \in K$ such that $x=ky$. Hence, by 1. in \cref{propkern}, $\theta_1(x)=\theta_1(ky)=\theta_1(y)$. So, $\theta_1(M)$ is a system of representatives of $M/K$ and  $\theta_1(x) \in Kx$, for every $x\in M$.
\end{proof}

\vspace{3mm}

As an immediate consequence of \cref{teof}, if $M$ is a simple group, the only solutions on $M$ are $s(x,y)=(xy,y)$ and $s(x,y)=(xy,1)$. Furthermore, as Kashaev and Sergeev observed in \cite{KaSe98}, if $s$ is an invertible map, the only solution on $M$ is given by $\theta_x(y)=y$. We give another proof of this fact, by using \cref{teof}.
\begin{cor}
	Let $M$ be a group and $s(x,y)=(xy, \theta_x(y))$ an invertible solution on $M$. Then, $\theta_x=\id_M$ holds, for every $x \in M$.
\end{cor}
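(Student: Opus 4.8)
The plan is to use \cref{teof} to reduce the statement to the claim that the kernel $K$ of $s$ is trivial. By \cref{teof} we may write $s(x,y)=\bigl(xy,\,\theta_1(x)^{-1}\theta_1(xy)\bigr)$, where $K$ is a normal subgroup of $M$ and $\theta_1(x)\in Kx$ for every $x\in M$. Once $K=\{1\}$ is known, the containment $\theta_1(x)\in Kx=\{x\}$ forces $\theta_1=\id_M$, and then $\theta_x(y)=\theta_1(x)^{-1}\theta_1(xy)=x^{-1}(xy)=y$ for all $x,y\in M$, i.e.\ $\theta_x=\id_M$; so the corollary follows.

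To prove $K=\{1\}$, I would use the invertibility statement from \cref{STR}: for every $(a,b)\in M\times M$ there is a \emph{unique} $(u,z)\in M\times M$ with $uz=a$ and $\theta_u(z)=b$. Apply this with $(a,b)=(1,1)$. On the one hand $(u,z)=(1,1)$ works, since $1\cdot 1=1$ and $\theta_1(1)=1$ by part~1 of \cref{proptheta}. On the other hand, for an arbitrary $k\in K$ the pair $(u,z)=(k,k^{-1})$ also works: clearly $k\,k^{-1}=1$, and $\theta_k(k^{-1})=\theta_1(k)^{-1}=1$ by part~3 of \cref{proptheta} together with the definition of $K$. Uniqueness then gives $(k,k^{-1})=(1,1)$, hence $k=1$, so $K=\{1\}$.

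I do not expect any genuine obstacle here: the whole argument is a short consequence of \cref{teof} and the uniqueness part of \cref{STR}, and the only delicate point is checking that $(k,k^{-1})$ satisfies both constraints, which is exactly where part~3 of \cref{proptheta}, namely $\theta_1(x)^{-1}=\theta_x(x^{-1})$, is used. If one wishes to bypass \cref{teof}, an alternative is to observe that taking $a=1$ in the criterion of \cref{STR} forces $\theta_1$ to be a bijection of $M$ (with $uz=1$ the condition $\theta_u(z)=b$ reads $\theta_1(u)^{-1}=b$, which must be uniquely solvable in $u$ for each $b$); combining injectivity of $\theta_1$ with the identity $\theta_1(ky)=\theta_1(y)$ for $k\in K$ coming from part~1 of \cref{propkern} again yields $K=\{1\}$, and hence $\theta_x=\id_M$ as before.
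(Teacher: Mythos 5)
Your proof is correct, and both of your arguments land on the same underlying strategy as the paper: invoke \cref{teof} to get the kernel $K$ with $\theta_x(y)\in Ky$, then use invertibility of $s$ to force $K=\{1\}$. Where you differ is in how invertibility is deployed. The paper writes $\theta_x(y)=ky$, applies $\theta_1\theta_x=\theta_1$ (part~2 of \cref{proptheta}) to get $\theta_1(ky)=\theta_1(y)$, and concludes $ky=y$ from the injectivity of $\theta_1$ --- a fact it uses without comment, though it does follow from invertibility of $s$ exactly as in your parenthetical remark. Your main argument instead exhibits $(1,1)$ and $(k,k^{-1})$ as two preimages of $(1,1)$ under $s$, using part~3 of \cref{proptheta} to check $\theta_k(k^{-1})=\theta_1(k)^{-1}=1$, and concludes $k=1$ from the uniqueness clause of \cref{STR}. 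This is a touch more self-contained, since it never needs to isolate injectivity of $\theta_1$ as a separate fact; the alternative you sketch at the end (deriving bijectivity of $\theta_1$ from the fibre over $a=1$, then using $\theta_1(ky)=\theta_1(y)$ from part~1 of \cref{propkern}) is essentially the paper's proof with the missing justification for injectivity filled in. All steps check out.
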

\begin{proof}
	By \cref*{teof}, if $K$ is the kernel of $s$, then $\theta_x(y) \in Ky$, for all $x,y \in M$. Hence, there exists $k \in K$ such that $\theta_x(y)=ky$, for all $x,y \in M$ and, by 2. in \cref{proptheta}, we get
	\begin{center}
		$\theta_1(y)=\theta_1\theta_x(y)=\theta_1(ky)$.
	\end{center}
	Then, since $\theta_1$ is injective, $ky=y$, for every $y \in M$. Therefore, $\theta_x=\id_M$, for every $x\in M$.
\end{proof}

\vspace{3mm}

\section{Some comments and questions}
In this section we raise some natural questions about set-theoretical solutions of pentagon equation. \\
The first natural question is to describe all the solutions $s(x,y)=(x \cdot y, x\ast y)$ when $(M, \ast)$ is a group. Here, for convenience, we set $xy:=x \ast y$, for all $x,y, \in M$.

\begin{prop}Let $M$ be a group. Then, $s(x,y)=(x \cdot y, xy)$ is a solution on $M$ if and only if $M$ is an elementary abelian $2$-group and $x \cdot y=x$ holds, for all $x, y \in M$.
\end{prop}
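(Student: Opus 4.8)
The plan is to work directly from the three equivalent conditions for $s(x,y)=(x\cdot y, x\ast y)$ to be a solution, recalled in the Introduction, specialised to the case where $x\ast y=xy$ is the group multiplication of $M$. Explicitly, $s$ is a solution if and only if: (1) $\cdot$ is associative; (2) $(xy)\cdot\bigl((x\cdot y)z\bigr)=x(y\cdot z)$; and (3) $(xy)\bigl((x\cdot y)z\bigr)=yz$, for all $x,y,z\in M$.

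For the ``if'' direction I would simply verify (1)--(3) under the hypotheses $x\cdot y=x$ and $M$ elementary abelian of exponent $2$: (1) holds since both sides equal $x$; in (2) the left-hand side collapses to $xy$ while the right-hand side is $x\ast y=xy$; and (3) reads $(xy)(xz)=xyxz=x^{2}yz=yz$ by commutativity and $x^{2}=1$, which matches $y\ast z=yz$. So this direction is a short check once stated.

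For the ``only if'' direction I would start from (3): cancelling $z$ on the right and then $xy$ on the left yields the explicit formula $x\cdot y=(xy)^{-1}y=y^{-1}x^{-1}y$; in particular $x\cdot 1=x^{-1}$ and $1\cdot 1=1$. Substituting $y=z=1$ into condition (2) gives $x\cdot(x\cdot 1)=x\ast(1\cdot 1)$, that is $x\cdot x^{-1}=x$; computing $x\cdot x^{-1}=(xx^{-1})^{-1}x^{-1}=x^{-1}$ from the formula forces $x^{-1}=x$ for every $x\in M$. Hence $M$ has exponent $2$, so $M$ is abelian, i.e.\ an elementary abelian $2$-group, and then $x\cdot y=y^{-1}x^{-1}y=x^{-1}=x$ for all $x,y\in M$.

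I do not expect a genuine obstacle: the only point requiring care is bookkeeping which of $\cdot$ and $\ast$ is the group operation in conditions (2) and (3), and noticing that the specialisation $y=z=1$ in (2) is exactly what pins down exponent $2$. Alternatively one may feed the formula $x\cdot y=y^{-1}x^{-1}y$ into the associativity condition (1) and specialise there; this also works but is marginally longer, so I would present the argument via (3) followed by (2).
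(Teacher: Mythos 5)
Your proof is correct and follows essentially the same route as the paper: both derive the formula $x\cdot y=y^{-1}x^{-1}y$ from condition (3) and then specialise condition (2) at $y=z=1$ to force $x^{-1}=x$. The only cosmetic difference is that you deduce commutativity from the standard fact that exponent $2$ implies abelian, whereas the paper extracts it from the associativity condition (1); your verification of the converse via conditions (1)--(3) is likewise sound.
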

\begin{proof}
Assuming that $s(x,y)=(x \cdot y, xy)$ is a solution on $M$, we obtain
	$xy(x \cdot y)z=yz$, for all $x,y,z \in M$, and so this implies
	\begin{equation}\label{cdot}
	x \cdot y=y^{-1}x^{-1}y.
	\end{equation}
	Furthermore, $xy \cdot \left(\left(x \cdot y\right )z\right)=x(y \cdot z)$ and, by \eqref{cdot}, this equality becomes
	\begin{center}
		$z^{-1}y^{-1}y^{-1}x^{-1}=xz^{-1}y^{-1}$.
	\end{center}
	In particular, if $y=z=1$, we get $x^{-1}=x$. In addition, from the associativity law of $\cdot$, we have 
	\begin{center}
		$y^{-1}xy=yzx^{-1}z^{-1}y^{-1}$.
	\end{center}
	Hence, $xz=zx$, for all $x,z \in M$. Therefore, $M$ is an elementary abelian $2$-group and $x \cdot y=x$, for all $x,y, \in M$.\\
	Conversely, if $x,y, z \in M$ we have
	\begin{align*}
	s_{23}s_{13}s_{12}(x,y,z)&=(x,xy,xyxz)=(x,xy,yz)=s_{12}s_{23}(x,y,z)
	\end{align*}
	and thus $s$ is a solution on $M$.
\end{proof}

\vspace{3mm}

As we noted above, Militaru's solutions are also reversed solutions. We can also build other solutions of this type, but we are not able to  describe them all. Here, we characterize only the solutions on groups that are also reversed solutions. In particular, if $(M, \ast)$ is an elementary abelian $2$-group, it is easy to check that the solution $s(x,y)=(x,x\ast y)$ on $M$ is also reversed. Instead, by \cref{teof}, in the case $(M,\cdot)$ is a group, we can prove the following result.

\begin{prop}Let $M$ be a group and $s(x,y)=\left(xy, \theta_x(y)\right)$ a solution on $M$. Then, $s$ is a reversed solution on $M$ if and only if $M$ is an elementary abelian $2$-group and $\theta_x=\id_M$ holds, for every $x \in M$.
\end{prop}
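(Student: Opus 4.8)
The plan is to prove the two implications separately; the real work is the forward direction, where the decisive observation is that the \emph{first} coordinate of the reversed pentagon equation pins down $\theta_x$ completely.

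First I would expand the reversed pentagon equation $s_{12}\,s_{13}\,s_{23}=s_{23}\,s_{12}$ in coordinates for a map $s(x,y)=(xy,\theta_x(y))$, exactly as \cref{STR} does for the pentagon equation. From $s_{12}(x,y,z)=(xy,\theta_x(y),z)$, $s_{23}(x,y,z)=(x,yz,\theta_y(z))$ and $s_{13}(x,y,z)=(xz,y,\theta_x(z))$ one computes the left-hand side as
\[
\bigl(x\,\theta_y(z)\,yz,\ \theta_{x\theta_y(z)}(yz),\ \theta_x\theta_y(z)\bigr)
\]
and the right-hand side as $\bigl(xy,\ \theta_x(y)\,z,\ \theta_{\theta_x(y)}(z)\bigr)$. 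Equating the first coordinates gives $\theta_y(z)\,yz=y$, that is,
\[
\theta_x(y)=x\,y^{-1}\,x^{-1}\qquad\text{for all }x,y\in M,
\]
and in particular $\theta_1(x)=x^{-1}$.

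Now I would use the hypothesis that $s$ is also a solution on $M$. By \cref{teof} there is a normal subgroup $K$ of $M$ such that $\theta_1(M)$ is a system of representatives of $M/K$ and $\theta_1(x)\in Kx$ for every $x$. Since $\theta_1(x)=x^{-1}$ we have $\theta_1(M)=M$, so the transversal $\theta_1(M)$ is all of $M$ and hence $K=\{1\}$; then $\theta_1(x)\in Kx=\{x\}$ forces $x=\theta_1(x)=x^{-1}$ for every $x\in M$. Thus $M$ has exponent $2$, hence is an elementary abelian $2$-group, and then $\theta_x(y)=x\,y^{-1}\,x^{-1}=y$, i.e. $\theta_x=\id_M$. (Alternatively, one can avoid \cref{teof} and substitute $\theta_x(y)=xy^{-1}x^{-1}$ directly into \eqref{idemtheta}; after cancellation it reduces to $x\,z\,x^{-1}=y\,z^{-1}\,y^{-1}$ for all $x,y,z$, and the choice $x=y=1$ gives $z=z^{-1}$.) For the converse, if $M$ is an elementary abelian $2$-group and $\theta_x=\id_M$, then $s(x,y)=(xy,y)$ is the Kac-Takesaki solution, hence a solution on $M$, and a one-line check gives $s_{12}s_{13}s_{23}(x,y,z)=(xyz^2,yz,z)=(xy,yz,z)=s_{23}s_{12}(x,y,z)$, the middle equality using $z^2=1$; so $s$ is also a reversed solution. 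I do not expect a genuine obstacle: the only thing to watch is the bookkeeping, namely keeping the reversed equation (not the pentagon equation) and the indices $s_{12},s_{13},s_{23}$ straight.
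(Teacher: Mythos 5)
Your proof is correct and follows essentially the same route as the paper: extract the first-coordinate identity from the reversed pentagon equation and combine it with \cref{teof} to force $K=\{1\}$ and exponent $2$, then verify the converse directly. If anything, your version is slightly more careful than the paper's (you pin down $\theta_x(y)=xy^{-1}x^{-1}$ explicitly before invoking the transversal structure, rather than specializing a coset representative $k$ that a priori depends on the arguments), and the alternative via \eqref{idemtheta} is a valid shortcut.
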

\begin{proof}
	First, suppose $s(x,y)=\left(xy, \theta_x\left(y\right)\right)$ is both a solution and a reversed solution on $M$. Moreover, let $K$ be the kernel of $s$. Then, by \cref{teof}, $\theta_x(y)\in Ky$, for all $x,y \in M$. Hence, requiring that $s$ is also a reversed solution, we get $kyxy=x$, for all $x,y \in M$ and for some $k \in K$.
	In particular, if $x=1$ and $y=1$, then $k=1$. So, $M$ is an elementary abelian $2$-group and  $\theta_x(y)=y$ holds, for all $x,y \in M$.\\
	Conversely, we suppose $M$ is an elementary abelian $2$-group. Then, $s(x,y)=\left( xy,y\right)$ is a solution on $M$ and
	\begin{align*}
	s_{12}s_{13}s_{23}(x,y,z)=(xzyz,yz,z)=(xy,yz,z)=s_{23}s_{12}(x,y,z).
	\end{align*}
	Therefore, $s$ is also a reversed solution on $M$. 
\end{proof}

\vspace{3mm}

Finally, we are still not able to characterize solutions on a set $M$ that are both commutative and cocommutative. We completely describe this type of solutions in the specific case when either $(M, \ast)$ or $(M, \cdot)$ is a group. Indeed, in the first case, if $(M, \ast)$ is an elementary abelian $2$-group, the solution $s(x,y)=(x,x\ast y)$ is both commutative and cocommutative; whereas if $(M,\cdot)$ is a group, we give the following result.

\begin{prop}
Let $M$ be a group and $s(x,y)=\left(xy, \theta_x(y) \right)$ a solution on $M$. Then, $s$ is both commutative and cocommutative if and only if $M$ is abelian and $\theta_x=\id_M$ holds, for every $x \in M$.
\end{prop}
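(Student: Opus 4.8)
The plan is to compute the three maps $s_{12},s_{23},s_{13}$ explicitly as transformations of $M\times M\times M$ and then read off commutativity and cocommutativity as systems of identities in the group operation and the family $(\theta_x)_{x\in M}$. Writing $s(x,y)=(xy,\theta_x(y))$ one gets $s_{12}(x,y,z)=(xy,\theta_x(y),z)$ and $s_{23}(x,y,z)=(x,yz,\theta_y(z))$, while unwinding $s_{13}=(\id_M\times\tau)\,s_{12}\,(\id_M\times\tau)$ gives $s_{13}(x,y,z)=(xz,y,\theta_x(z))$.

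For the forward implication I would start from cocommutativity. Composing the above, $s_{13}s_{23}(x,y,z)=\bigl(x\,\theta_y(z),\,yz,\,\theta_x(\theta_y(z))\bigr)$ and $s_{23}s_{13}(x,y,z)=\bigl(xz,\,y\,\theta_x(z),\,\theta_y(\theta_x(z))\bigr)$, so already comparing first coordinates forces $x\,\theta_y(z)=xz$ for all $x,y,z\in M$, whence $\theta_y(z)=z$, i.e. $\theta_x=\id_M$ for every $x\in M$. Feeding this back, $s(x,y)=(xy,y)$, and then comparing first coordinates in the commutativity relation $s_{12}s_{13}=s_{13}s_{12}$, which now reads $s_{12}s_{13}(x,y,z)=(xzy,y,z)$ versus $s_{13}s_{12}(x,y,z)=(xyz,y,z)$, forces $xzy=xyz$ for all $x,y,z$, i.e. $M$ is abelian. (Alternatively one may invoke \cref{teof} to get $\theta_x(y)\in Ky$ for the kernel $K$ of $s$, and cocommutativity again yields $\theta_x=\id_M$; the direct computation is shorter.)

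For the converse, assuming $M$ abelian and $\theta_x=\id_M$ the solution is the Kac-Takesaki solution $s(x,y)=(xy,y)$ of \cref{exsKey}, and a one-line substitution confirms both relations: $s_{12}s_{13}(x,y,z)=(xyz,y,z)=s_{13}s_{12}(x,y,z)$ using that $M$ is abelian, and $s_{13}s_{23}(x,y,z)=(xz,yz,z)=s_{23}s_{13}(x,y,z)$ with no hypothesis needed.

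Main obstacle: there is essentially none. The only points requiring care are the correct expansion of $s_{13}$ through the two flips and the bookkeeping of the several coordinate identities coming out of each relation; once one observes that the first-coordinate equations alone already pin down $\theta_x=\id_M$ and the commutativity of $M$, all remaining coordinate identities hold automatically.
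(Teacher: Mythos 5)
Your proposal is correct and follows essentially the same route as the paper: the paper's (very terse) proof likewise reads off that commutativity forces $M$ abelian and cocommutativity forces $\theta_x=\id_M$ from the coordinate identities, and verifies the converse for the Kac--Takesaki solution $s(x,y)=(xy,y)$. Your version merely writes out the explicit expansions of $s_{12},s_{13},s_{23}$ that the paper leaves to the reader, and all of those computations check out.
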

\begin{proof}
	Suppose the solution $s(x,y)=\left(xy, \theta_x(y) \right)$ is both commutative and cocommutative. Then, since $s$ is commutative, $M$ is abelian; moreover, since $s$ is cocommutative, $\theta_x(y)=y$ holds, for all $x,y \in M$.\\
	Conversely, if $M$ is abelian, the solution $s(x,y)=(xy,y)$ is trivially both commutative and cocommutative.
\end{proof}

\vspace{5mm}

\end{document}